\documentclass[12pt]{article}

\usepackage[T1]{fontenc}
\usepackage{lmodern}
\usepackage{amsmath,amssymb,amsfonts,amsthm,mathtools}
\usepackage{enumitem}
\usepackage{needspace}
\usepackage[pdfauthor={Chunqiu Fang and Rongxing Xu},
    pdftitle={On the maximum number of triangles in tripartite graphs with no 4-cycles between any two parts},
  pdfkeywords={tripartite graphs, 4-cycles, planar polynomials, finite fields},
  pdfstartview=XYZ,bookmarks=true,colorlinks=true,
  linkcolor=blue,urlcolor=blue,citecolor=blue,
  linktocpage=true,hyperindex=true]{hyperref}

\allowdisplaybreaks
\numberwithin{equation}{section}
\setlist[enumerate]{leftmargin=2.35em,itemsep=0.2em,topsep=0.3em}
\setlist[itemize]{leftmargin=2em,itemsep=0.2em,topsep=0.3em}

\newtheorem{theorem}{Theorem}[section]
\newtheorem{lemma}[theorem]{Lemma}

\newtheorem{corollary}[theorem]{Corollary}

\newtheorem{observation}[theorem]{Observation}
\newtheorem{problem}[theorem]{Problem}
\theoremstyle{definition}
\newtheorem{definition}[theorem]{Definition}

\newcommand{\F}{\mathbb F}
\newcommand{\sourcecite}[2]{\cite{#1}, #2}

\title{\large{\bfseries On the maximum number of triangles in tripartite
graphs with no $4$-cycles between any two parts}}
\author{
  Chunqiu Fang \thanks{School of Computer Science and Technology, Dongguan University of Technology, Dongguan, Guangdong, 523808, China. Email: \texttt{fcq15@tsinghua.org.cn}. Supported by the National Natural Science Foundation for Young Scientists of China (Grant No.~12301435).}
  \quad
  Rongxing Xu\thanks{School of Mathematical Sciences, Zhejiang Normal University, Jinhua, Zhejiang, 321000, China. Email: \texttt{xurongxing@zjnu.edu.cn}. Supported by the National Natural Science Foundation for Young Scientists of China (Grant No.~12401472) and the Zhejiang Provincial Natural Science Foundation of China (Grant No.~LQN25A010011).}
}
\date{}

\begin{document}

\maketitle

\begin{abstract}
Let $G$ be a $3$-partite graph with $k$ vertices in each part such that the bipartite graph induced by any two parts contains no cycle of length four. Fischer and Matou\v{s}ek [J. Combin. Theory Ser. A, 2001] asked for the maximum number of triangles in such a graph. They obtained the lower bound $(1-o(1))k^{3/2}$ and the upper bound $k^{7/4}+O(k^{3/2})$. Coulter, Matthews and Timmons [J. Combin. Theory Ser. B, 2018] later constructed such graphs using planar polynomials over finite fields and improved the lower bound to $(1-o(1))k^{5/3}$. In this note, we use a new triple of planar polynomials and further improve the lower bound to $(1-o(1))k^{17/10}$.
\end{abstract}

\noindent\textbf{Keywords:} tripartite graphs; $4$-cycles; planar polynomials; finite fields.

\section{Introduction}

Fischer and Matou\v{s}ek~\cite{FischerMatousek2001} posed the following graph problem while studying the extremal size of families of multivalued functions with bounded Natarajan dimension.

\begin{problem}\label{prob:FM}
Let $G$ be a $3$-partite graph with $k$ vertices in each part. Suppose that the bipartite graph induced by any two parts contains no cycle of length four. Determine how many triangles can appear in such a graph.
\end{problem}

Let $\triangle(k)$ be the maximum number of triangles in a graph satisfying Problem~\ref{prob:FM}. A construction of K\H{o}v\'ari, S\'os and Tur\'an~\cite{KovariSosTuran1954} yields $C_4$-free bipartite graphs with $k$ vertices in each part and $(1-o(1))k^{3/2}$ edges. Fischer and Matou\v{s}ek placed one such graph between two parts. They extended every edge to a triangle through a single vertex in the third part. This construction shows that $\triangle(k)\ge (1-o(1))k^{3/2}$. They also proved the upper bound $\triangle(k)\le k^{7/4}+O(k^{3/2})$~\cite{FischerMatousek2001}.
Coulter, Matthews and Timmons~\cite{CMT2018} later constructed such graphs using planar polynomials over finite fields and improved the lower bound to $\triangle(k)\ge(1-o(1))k^{5/3}$.

In this note, we use a new triple of planar polynomials to prove the following stronger lower bound.

\begin{theorem}\label{thm:main}
If $q$ is a power of an odd prime, then
$\triangle(q^{10})\ge q^{17}+q^{15}$.
\end{theorem}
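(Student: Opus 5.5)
The plan is to follow the planar-polynomial framework of Coulter, Matthews and Timmons~\cite{CMT2018}, with a different choice of field, dimension and triple of polynomials. I would take $E=\F_{q^5}$, a field of odd characteristic. Each of the three parts $V_1,V_2,V_3$ is a copy of $E\times E$, so $k=q^{10}$.

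\textbf{Step 1: build the graph from a parametrised family of triangles.} Fix a parameter set $T$, with the parameter written as $(x,m,b)$. Choose three maps $\phi_i\colon T\to V_i$ of the shape
\[
\phi_i(x,m,b)=\bigl(x+\alpha_i m,\; b+m\,g_i(x)+f_i(m)\bigr),
\]
where $f_1,f_2,f_3$ is the new triple of polynomials and the $g_i$ and constants $\alpha_i$ are fixed data. Declare $\phi_i(t)\phi_j(t)$ to be an edge for every $t\in T$ and every $i\ne j$. Then every $t\in T$ yields a triangle $\phi_1(t)\phi_2(t)\phi_3(t)$.

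\textbf{Step 2: make the triangles distinct.} I would arrange that each pair map $t\mapsto(\phi_i(t),\phi_j(t))$ is injective on $T$. This makes the designed triangles pairwise distinct, and it is a linear-algebra check on the first coordinates. Any extra triangles created accidentally only help, since we want a lower bound. It then remains to show that $|T|\ge q^{17}+q^{15}$.

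If $T$ were all of $E^3$ we would get $q^{15}$ triangles, but then edges would be too dense. So I expect $T$ to be larger than a product set. Two options:
\begin{itemize}
\item let some coordinates range over a bigger subfield-like space, such as $\F_{q^{10}}$ or a $7$-dimensional $\F_q$-subspace;
\item take $T$ to be the solution set of a trace-type equation over $\F_{q^{10}}$.
\end{itemize}
Either way the count $q^{17}+q^{15}$ should come out as a point count, for instance $q^{15}(q^2+1)$: $q^2+1$ admissible values of a ``slope'' parameter, each contributing a full $q^{15}$ fibre. I would tune the definition of $T$ so that this count is exact.

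\textbf{Step 3: verify $C_4$-freeness, the main obstacle.} Between $V_i$ and $V_j$, a $4$-cycle means two distinct vertices of $V_i$ with two common neighbours in $V_j$. Write the two edges from the first vertex as images of $t,t'\in T$, with matching images in $V_j$. Subtracting the equations coordinatewise reduces the system to a single equation
\[
F_{ij}(u+h)-F_{ij}(u)=c,
\]
where $F_{ij}$ is a combination of $f_i,f_j,g_i,g_j$ (essentially $f_i-f_j$ after a linear substitution), $h\ne 0$, and $c$ is determined by the vertices. Hence $C_4$-freeness between $V_i$ and $V_j$ holds exactly when each difference operator $u\mapsto F_{ij}(u+h)-F_{ij}(u)$ is injective on the relevant domain. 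That is, the three combinations $F_{12},F_{13},F_{23}$ must all be planar.

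The crux is choosing $f_1,f_2,f_3$ so that all three pairwise combinations are simultaneously planar on the enlarged parameter set. My first attempt would use Dembowski--Ostrom monomials and binomials such as $x^{q^a+1}$ and $x^2+\lambda x^{q^{a}+1}$ over $\F_{q^{10}}$. Planarity of these reduces to the linearised map $h\mapsto h^{q^a}u+hu^{q^a}$ having trivial kernel, which is classical: for $x^{q^a+1}$ it holds precisely when $10/\gcd(a,10)$ is odd. I would then choose exponents and constants $\lambda$ so that each pairwise difference is, up to linear equivalence, one of these planar DO polynomials; I expect this to be the only genuinely hard computation. The final step is to check that the fibre structure of $T$ is compatible with the domain on which planarity is needed.
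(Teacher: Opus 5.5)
There is a genuine gap, and it is structural rather than computational: the counting strategy of Step~2 cannot reach the target. You require every pair map $t\mapsto(\phi_i(t),\phi_j(t))$ to be injective and then want $|T|\ge q^{17}+q^{15}$. Injectivity means the bipartite graph between $V_i$ and $V_j$ has exactly $|T|$ edges, so your designed triangles are pairwise edge-disjoint. A $C_4$-free bipartite graph with $k=q^{10}$ vertices in each part has at most $\frac{k}{2}\bigl(1+\sqrt{4k-3}\bigr)<q^{15}+q^{10}$ edges (the K\H{o}v\'ari--S\'os--Tur\'an/Reiman count). Hence $|T|<q^{15}+q^{10}$, whatever $T$, $f_i$, $g_i$, $\alpha_i$ you pick. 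Edge-disjoint triangles can never beat the trivial $k^{3/2}$ bound of Fischer and Matou\v{s}ek; any improvement requires every edge to lie in many triangles. Two smaller problems: enlarging parameter ranges to $\F_{q^{10}}$ is incompatible with vertices being points of $\F_{q^5}^2$, and the planarity you need is over $\F_{q^5}$, not $\F_{q^{10}}$. In the paper each edge lies in exactly $q^2+1$ triangles, and these are precisely the ``accidental'' triangles you set aside as a bonus.

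The missing idea is to fix the edges first by translation-invariant rules: $(x,y)_A\sim(x+z,y+f(z))_B$, $(x,y)_B\sim(x+z,y+g(z))_C$ and $(x,y)_C\sim(x+z,y+h(z))_A$, with $f,g,h$ planar over $\F_{q^5}$. Planarity alone gives $C_4$-freeness. One then counts \emph{all} triangles through $(x,y)_A$ as the pairs $(r,s)$ with $f(r)+g(s)+h(-r-s)=0$. The paper takes $P=X^{q^2+1}$, $f=2P-L_\rho$, $g=2P+L_\rho$ and $h=-P$. The identity $2P(r)+2P(s)=P(r+s)+P(r-s)$, the evenness of $q^2+1$ and the additivity of $L_\rho$ reduce the triangle condition to the one-variable equation $P(d)=L_\rho(d)$ with $d=r-s$. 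So each vertex of $A$ lies in $q^5N$ triangles, where $N$ is the number of solutions $d\in\F_{q^5}$.

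The real work is to make $N=q^2+1$. The paper takes $L_\rho(X)=X^q-\rho X^{q^2}-\rho^{q^2}X$ for a root $\rho$ of $H(X)=X^{q^2+1}-X^q-1$ and observes that $H(x+\rho)=x^{q^2+1}-L_\rho(x)$. It then applies the five-periodicity of the Lyness recurrence $y_iy_{i+2}=1+y_{i+1}$ to $y_i=\alpha^{q^i}$, which shows that all roots of $H$ lie in $\F_{q^5}$. Finally, $H'=X^{q^2}$ shows that these roots are simple. Your proposal leaves the polynomials, the set $T$ and the count to be ``tuned''. None of this content, which is what produces the exponent $17/10$, is supplied.
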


\begin{corollary}\label{cor:asymptotic}
As $k\to\infty$,
\[
        \triangle(k)\ge (1-o(1))k^{17/10}.
\]
\end{corollary}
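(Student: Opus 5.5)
The corollary follows from Theorem~\ref{thm:main} by two facts: $\triangle$ is monotone in $k$, and odd prime powers are dense enough. I plan to use only odd primes $p$ (taking $q=p$) and to interpolate between consecutive values $p^{10}$.

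First, monotonicity: $\triangle(k)\ge\triangle(m)$ whenever $k\ge m$. Given an extremal graph with $m$ vertices per part, add $k-m$ isolated vertices to each part. This creates no new $4$-cycles between any two parts and removes no triangles.

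Now fix a large $k$ and let $p$ be the largest odd prime with $p^{10}\le k$, that is, $p\le k^{1/10}$. By the prime number theorem (or Baker--Harman--Pintz), there is a prime in $[x-x^{0.525},x]$ for all large $x$. Applying this with $x=k^{1/10}$ gives $p\ge k^{1/10}-k^{0.0525}=k^{1/10}(1-o(1))$. Even Bertrand's postulate alone is not enough here, since it only gives a constant factor; the statement $p_{n+1}/p_n\to1$ suffices.

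Then, by monotonicity and Theorem~\ref{thm:main},
\[
\triangle(k)\ge\triangle(p^{10})\ge p^{17}\ge \bigl(k^{1/10}(1-o(1))\bigr)^{17}=(1-o(1))k^{17/10}.
\]
The only nontrivial input is the asymptotic density of primes, $p_{n+1}/p_n\to1$, and I expect no real obstacle beyond citing it.
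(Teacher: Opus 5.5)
Your proposal is correct and follows the paper's proof: the paper also applies Baker--Harman--Pintz with $x=k^{1/10}$ to get an odd prime $p\in[k^{1/10}-k^{0.0525},k^{1/10}]$, applies Theorem~\ref{thm:main} with $q=p$, and pads each part with isolated vertices. Your remark that the weaker fact $p_{n+1}/p_n\to1$, which follows from the prime number theorem, already suffices is also correct.
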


\begin{proof}
Baker, Harman and Pintz~\cite{BakerHarmanPintz2001} proved that the interval $[x-x^{0.525},x]$ contains a prime whenever $x$ is sufficiently large. For each sufficiently large $k$, apply this result with $x=k^{1/10}$ and choose a prime $p=p(k)$ satisfying $k^{1/10}-k^{0.0525}\le p\le k^{1/10}$. The lower bound on $p$ tends to infinity. Thus $p$ is odd for all sufficiently large $k$. By Theorem~\ref{thm:main} with $q=p$, we have a graph with $p^{10}$ vertices in each part and at least $p^{17}+p^{15}$ triangles. We add isolated vertices until each part has $k$ vertices. The bounds on $p$ imply that $p/k^{1/10}\to1$. Hence $p^{17}+p^{15}=(1+o(1))k^{17/10}$, and the desired lower bound follows.
\end{proof}

In Section~\ref{sec:preliminaries}, we review the finite-field facts needed for the construction and establish a root-counting consequence of a five-periodic recurrence. In Section~\ref{sec:construction}, we define the graph and prove Theorem~\ref{thm:main}.

Throughout the rest of the paper, $q$ denotes a power of an odd prime. For standard finite-field terminology and results not recalled here, we refer the reader to \cite{LidlNiederreiter1997}.

\section{Preliminaries}\label{sec:preliminaries}

In this section, we first recall some standard facts about finite fields and polynomial maps. We then introduce planar polynomials and use them to construct $C_4$-free bipartite graphs. Finally, we construct the auxiliary $q$-polynomial used in Section~\ref{sec:construction}.

\subsection{Finite fields and polynomial maps}

We use the following standard finite-field facts.

\begin{lemma}\label{lem:finite-field-facts}
The following statements hold.
\begin{enumerate}[label=\textup{(\arabic*)},ref=(\arabic*)]
\item\label{part:finite-field-roots}
For every positive integer $n$, every extension field $F$ of $\F_{q^n}$ and every $x\in F$, we have $x\in\F_{q^n}$ if and only if $x^{q^n}=x$.~\textup{(A corollary of Lemma~2.4 of \cite{LidlNiederreiter1997}, p.~49)}
\item\label{part:frobenius-identities}
In any extension field $F$ of $\F_q$, for all $x,y\in F$, $c\in\F_q$ and integers $i\ge0$,
\[
 (x+y)^{q^i}=x^{q^i}+y^{q^i},
 \qquad
 (cx)^{q^i}=c x^{q^i}.
\]
\textup{(By Theorem~1.46 in \cite{LidlNiederreiter1997}, p.~16, for the first equality and repeated application of part~\ref{part:finite-field-roots} for the second)}
\end{enumerate}
\end{lemma}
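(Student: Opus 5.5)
Both parts follow from the structure of the multiplicative group of a finite field together with the binomial theorem in characteristic $p$, where $p$ is the odd prime with $q=p^m$. I will prove part~\ref{part:finite-field-roots} first and use it in part~\ref{part:frobenius-identities}.

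For part~\ref{part:finite-field-roots}, one direction is Lagrange's theorem. The group $\F_{q^n}^{*}$ has order $q^n-1$, so $x^{q^n-1}=1$ for every nonzero $x\in\F_{q^n}$. Multiplying by $x$ gives $x^{q^n}=x$, and this holds trivially for $x=0$. For the converse, note that the polynomial $X^{q^n}-X$ has degree $q^n$. It therefore has at most $q^n$ roots in the field $F$. All $q^n$ elements of $\F_{q^n}\subseteq F$ are already roots, so these are exactly the roots. Hence any $x\in F$ with $x^{q^n}=x$ lies in $\F_{q^n}$.

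For part~\ref{part:frobenius-identities}, I start with the additivity of $x\mapsto x^p$ in a field of characteristic $p$. By the binomial theorem, $(x+y)^p=\sum_j\binom{p}{j}x^jy^{p-j}$. For $0<j<p$ the coefficient $\binom{p}{j}=p!/(j!(p-j)!)$ is divisible by $p$, because $p$ divides the numerator but not the denominator. These middle terms therefore vanish in $F$, leaving $(x+y)^p=x^p+y^p$. Since $q^i=p^{mi}$, iterating this identity $mi$ times gives $(x+y)^{q^i}=x^{q^i}+y^{q^i}$. For the second identity, $(cx)^{q^i}=c^{q^i}x^{q^i}$ by commutativity. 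Applying part~\ref{part:finite-field-roots} with $n=1$ to $c\in\F_q$ gives $c^q=c$, and induction on $i$ then gives $c^{q^i}=c$.

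I do not expect any real obstacle. The only point that needs care is the root-counting step in part~\ref{part:finite-field-roots}. It must be carried out inside the ambient field $F$, so that $X^{q^n}-X$ is known to have at most $q^n$ roots there. This is what guarantees that no element of $F$ outside $\F_{q^n}$ can satisfy $x^{q^n}=x$.
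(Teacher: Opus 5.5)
Your proof is correct and follows the same route as the paper, which cites Lidl--Niederreiter for both parts. For part (1) the paper uses the factorization $X^{q^n}-X=\prod_{a\in\F_{q^n}}(X-a)$, which is equivalent to your Lagrange-plus-root-counting argument; for part (2) it uses the iterated characteristic-$p$ binomial identity together with repeated use of $c^q=c$, exactly as you do, so you have simply written out the standard textbook proofs of the cited results.
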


%
For a positive integer $m$ and a nonnegative integer $s$, a polynomial of the form
\[
        L(X)=\sum_{i=0}^{s}a_iX^{q^i},
        \qquad a_0,\ldots,a_s\in\F_{q^m},
\]
is called a \emph{$q$-polynomial over $\F_{q^m}$}.

\Needspace{7\baselineskip}
By Lemma~\ref{lem:finite-field-facts}\ref{part:frobenius-identities} and the finiteness of $\F_{q^m}$, we have the following observation.

\begin{observation}\label{obs:q-polynomial-properties}
For every positive integer $m$ and every $q$-polynomial $L$ over $\F_{q^m}$,
\[
        L(x+y)=L(x)+L(y)
\]
for all $x,y\in\F_{q^m}$. Moreover, $L$ is bijective if and only if its only root in $\F_{q^m}$ is $0$.
\end{observation}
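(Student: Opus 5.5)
The plan is to prove the two parts of Observation~\ref{obs:q-polynomial-properties} separately: additivity comes straight from Lemma~\ref{lem:finite-field-facts}, and the bijectivity criterion then follows from additivity together with the finiteness of $\F_{q^m}$.

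\emph{Additivity.} Write $L(X)=\sum_{i=0}^{s}a_iX^{q^i}$ with $a_i\in\F_{q^m}$, and fix $x,y\in\F_{q^m}$. Regard $\F_{q^m}$ as an extension field of $\F_q$. By the first identity in Lemma~\ref{lem:finite-field-facts}\ref{part:frobenius-identities}, $(x+y)^{q^i}=x^{q^i}+y^{q^i}$ for every $i\ge0$. Multiplying by $a_i$ and summing over $i$ gives
\[
L(x+y)=\sum_{i=0}^{s}a_i\bigl(x^{q^i}+y^{q^i}\bigr)=L(x)+L(y).
\]
We also record that $L(0)=0$, since every monomial $X^{q^i}$ has positive degree. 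Consequently $L$, viewed as a map $\F_{q^m}\to\F_{q^m}$, is a homomorphism of the additive group. The second identity, $(cx)^{q^i}=cx^{q^i}$ for $c\in\F_q$, even shows that $L$ is $\F_q$-linear, but we do not need that here.

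\emph{Bijectivity criterion.} For the forward direction, suppose $L$ is bijective. Since $L(0)=0$, injectivity forces $0$ to be its only root in $\F_{q^m}$. For the converse, suppose $0$ is the only root. If $L(x)=L(y)$, then additivity gives $L(x-y)+L(y)=L(x)$, so $L(x-y)=0$ and hence $x=y$. Thus $L$ is injective. An injective map from the finite set $\F_{q^m}$ to itself is surjective, so $L$ is bijective.

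There is no real obstacle in this argument. The only point needing care is that Frobenius additivity is applied inside the field $\F_{q^m}$ of characteristic $p$, which Lemma~\ref{lem:finite-field-facts} already provides.
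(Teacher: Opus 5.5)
Your proof is correct and follows the paper's route: additivity from the Frobenius identity in Lemma~\ref{lem:finite-field-facts}\ref{part:frobenius-identities}, then the bijectivity criterion from additivity (a trivial kernel gives injectivity) together with the finiteness of $\F_{q^m}$. The paper justifies the observation in a single line, and your write-up fills in the routine details.
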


We also need the formal derivative of a polynomial. If $F$ is a field and
$f(X)=\sum_{i=0}^{d}c_i X^i\in F[X]$, its \emph{formal derivative} is
\[
        f'(X)=\sum_{i=1}^{d}i c_i X^{i-1}.
\]

The following lemma records two standard facts about polynomials.

\begin{lemma}\label{lem:polynomial-facts}
Let $F$ be a field.
\begin{enumerate}[label=\textup{(\arabic*)},ref=(\arabic*)]
\item\label{part:polynomial-factorization}
If $f\in F[X]$ has positive degree $d$, then there is an extension field $F_1$ of $F$ containing elements $\alpha_1,\ldots,\alpha_d$ such that
\[
        f(X)=c\prod_{j=1}^{d}(X-\alpha_j),
\]
where $c$ is the coefficient of $X^d$ in $f$.~\textup{(\sourcecite{LidlNiederreiter1997}{Theorem~1.91, p.~35})}
\item\label{part:repeated-root}
If $f\in F[X]$ is nonzero and $\alpha\in F$, then $\alpha$ is a repeated root of $f$ if and only if it is a root of both $f$ and $f'$.~\textup{(\sourcecite{LidlNiederreiter1997}{Theorem~1.68, p.~27})}
\end{enumerate}
\end{lemma}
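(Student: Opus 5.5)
Both parts of Lemma~\ref{lem:polynomial-facts} are textbook results, and each is cited to \cite{LidlNiederreiter1997}. A short proof sketch would still go as follows.

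For part~\ref{part:polynomial-factorization}, I will argue by induction on $d$, building a splitting field by repeated adjunction of roots. If $d=1$, then $f=c(X-\alpha_1)$ with $\alpha_1\in F$, and we take $F_1=F$. For $d>1$, pick an irreducible factor $g$ of $f$ in $F[X]$ and form $K=F[X]/(g)$. This is a field because $(g)$ is maximal in the principal ideal domain $F[X]$. It contains a copy of $F$, and the class $\alpha_1$ of $X$ is a root of $g$, hence of $f$. Division with remainder in $K[X]$ gives $f=(X-\alpha_1)h$ with $\deg h=d-1$, and $h$ has the same leading coefficient $c$. Applying the induction hypothesis to $h$ over $K$ gives an extension $F_1\supseteq K\supseteq F$ in which $h=c\prod_{j=2}^{d}(X-\alpha_j)$. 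This finishes the induction. The only point needing care is that $F[X]/(g)$ really is a field extension of $F$, and that follows from irreducibility.

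For part~\ref{part:repeated-root}, I will use the factor theorem together with the product rule for formal derivatives. The product rule $(uv)'=u'v+uv'$ is checked on monomials and extended by bilinearity. Suppose $\alpha$ is a root of $f$, and write $f=(X-\alpha)g$. Then
\[
f'=g+(X-\alpha)g',
\qquad\text{so}\qquad
f'(\alpha)=g(\alpha).
\]
Now $\alpha$ is a repeated root, meaning $(X-\alpha)^2\mid f$, exactly when $(X-\alpha)\mid g$. This happens exactly when $g(\alpha)=0$, that is, when $f'(\alpha)=0$. Conversely, if $\alpha$ is a repeated root then it is in particular a root of $f$, so the same computation applies, and this gives the equivalence.

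I expect no real obstacle in either part. The only subtlety is verifying the product rule for the formal derivative over an arbitrary field, including positive characteristic, and this is a routine coefficient comparison.
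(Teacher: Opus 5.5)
Your proposal is correct. The paper gives no proof of this lemma and only cites Lidl--Niederreiter (Theorems 1.91 and 1.68); your arguments are the standard textbook proofs behind those citations: adjoining a root via the field $F[X]/(g)$ and inducting on the degree over all fields for part (1), and using $f'=g+(X-\alpha)g'$ to get $f'(\alpha)=g(\alpha)$ for part (2).
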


\subsection{Planar polynomials}

Planar functions were introduced by Dembowski and Ostrom~\cite{DembowskiOstrom1968} in connection with the construction of affine planes. We use the following standard definition of a planar polynomial over a finite field~\cite{CMT2018}.

\begin{definition}\label{def:planar}
Let $F$ be a finite field and let $f\in F[X]$. The polynomial $f$ is \emph{planar over $F$} if, for every $a\in F^*=F\setminus\{0\}$, the map $x\mapsto f(x+a)-f(x)$ is a bijection from $F$ to itself.
\end{definition}

The next lemma records a basic stability property of planar polynomials.

\begin{lemma}\label{lem:q-polynomial-perturbation}
Let $m$ be a positive integer. If $f\in\F_{q^m}[X]$ is planar over $\F_{q^m}$, $c\in\F_{q^m}^*$ and $L$ is a $q$-polynomial over $\F_{q^m}$, then $cf+L$ is planar over $\F_{q^m}$.
\end{lemma}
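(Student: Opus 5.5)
The plan is to unwind Definition~\ref{def:planar} for $g=cf+L$ and use additivity of $L$ from Observation~\ref{obs:q-polynomial-properties}. Fix $a\in\F_{q^m}^*$. For every $x\in\F_{q^m}$ we compute
\[
g(x+a)-g(x)=c\bigl(f(x+a)-f(x)\bigr)+L(x+a)-L(x)=c\bigl(f(x+a)-f(x)\bigr)+L(a),
\]
where the second equality uses $L(x+a)=L(x)+L(a)$.

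The difference map of $g$ in direction $a$ is therefore the composition of three maps. The first is the difference map $x\mapsto f(x+a)-f(x)$, which is a bijection of $\F_{q^m}$ because $f$ is planar. The second is multiplication by the nonzero constant $c$, which is a bijection since its inverse is multiplication by $c^{-1}$. The third is translation by the fixed element $L(a)$, also a bijection. A composition of bijections is a bijection, so $x\mapsto g(x+a)-g(x)$ is bijective for every $a\ne0$. Hence $g$ is planar over $\F_{q^m}$.

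No step is a real obstacle. The only point needing care is that additivity of $L$ is needed only on $\F_{q^m}$, which is exactly what Observation~\ref{obs:q-polynomial-properties} supplies. We also note that $g$ is indeed a polynomial in $\F_{q^m}[X]$, since $L$ has coefficients in $\F_{q^m}$.
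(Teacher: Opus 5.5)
Your proposal is correct and follows the paper's proof: you use additivity of $L$ (Observation~\ref{obs:q-polynomial-properties}) to write $(cf+L)(x+a)-(cf+L)(x)=c\bigl(f(x+a)-f(x)\bigr)+L(a)$. You then observe that this map is the bijective difference map of $f$, scaled by $c\neq0$ and translated by $L(a)$, so it is itself a bijection.
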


\begin{proof}
By Observation~\ref{obs:q-polynomial-properties}, for every $a\in\F_{q^m}^*$ we have
\[
 (cf+L)(x+a)-(cf+L)(x)
 =c\bigl(f(x+a)-f(x)\bigr)+L(a).
\]
Since $x\mapsto f(x+a)-f(x)$ is bijective, its nonzero scalar multiple followed by translation by $L(a)$ is also bijective.
\end{proof}

\begin{lemma}\label{lem:planar}
The polynomial $P(X)=X^{q^2+1}$ is planar over $\F_{q^5}$. Furthermore,
\[
        2P(x)+2P(y)=P(x+y)+P(x-y)
\]
for all $x,y\in\F_{q^5}$.
\end{lemma}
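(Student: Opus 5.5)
The plan is to reduce planarity to the injectivity of a $q$-polynomial and then handle the identity by direct expansion.

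\emph{Reduction to a $q$-polynomial.} Fix $a\in\F_{q^5}^*$. Using Lemma~\ref{lem:finite-field-facts}\ref{part:frobenius-identities},
\[
 P(x+a)-P(x)=(x+a)^{q^2}(x+a)-x^{q^2+1}=a x^{q^2}+a^{q^2}x+a^{q^2+1}.
\]
The map $x\mapsto a x^{q^2}+a^{q^2}x$ is a $q$-polynomial over $\F_{q^5}$. By Observation~\ref{obs:q-polynomial-properties}, it suffices to show that its only root in $\F_{q^5}$ is $0$; translating by the constant $a^{q^2+1}$ preserves bijectivity.

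\emph{The root condition.} Suppose $x\neq0$ and $a x^{q^2}+a^{q^2}x=0$. Dividing by $ax$ gives $x^{q^2-1}=-a^{q^2-1}$. Setting $y=x/a\in\F_{q^5}^*$, this becomes $y^{q^2-1}=-1$.

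I then raise both sides to powers of $q^2$. This gives $y^{q^{2j}}=(-1)^j y$, using that $-1\in\F_q$ is fixed by Frobenius. Taking $j=5$ gives $y^{q^{10}}=-y$. On the other hand, $y\in\F_{q^5}$, so Lemma~\ref{lem:finite-field-facts}\ref{part:finite-field-roots} gives $y^{q^5}=y$, and applying it twice gives $y^{q^{10}}=y$. Therefore $y=-y$, so $2y=0$. Since $q$ is odd, $y=0$, a contradiction. This parity argument, resting on the fact that $5$ is odd, is the main point; everything else is routine. Hence $P$ is planar.

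\emph{The identity.} Write $\sigma(x)=x^{q^2}$, which is additive by Lemma~\ref{lem:finite-field-facts}\ref{part:frobenius-identities}. Then
\[
 P(x\pm y)=(\sigma x\pm\sigma y)(x\pm y)=\sigma(x)x\pm\bigl(\sigma(x)y+\sigma(y)x\bigr)+\sigma(y)y .
\]
Adding the two signs, the cross terms cancel and we get $2\sigma(x)x+2\sigma(y)y=2P(x)+2P(y)$. This argument uses only additivity, so it holds for all $x,y$.
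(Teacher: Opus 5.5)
Your proposal is correct and follows the paper's proof: the same reduction to the $q$-polynomial $aX^{q^2}+a^{q^2}X$, the same substitution $x=at$ leading to $t^{q^2}=-t$, and the same direct expansion for the identity. The only difference is cosmetic and lies in the final contradiction: you iterate to $y^{q^{10}}=-y$ (using that $5$ is odd), while the paper derives $t^{q^4}=t$, combines it with $t^{q^5}=t$ to get $t^q=t$, and then contradicts $t^{q^2}=-t$.
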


\begin{proof}
For $a\in\F_{q^5}^*$, consider the $q$-polynomial
\[
        \lambda_a(X)=aX^{q^2}+a^{q^2}X.
\]
By Lemma~\ref{lem:finite-field-facts}\ref{part:frobenius-identities}, applied with $i=2$, for every $x\in\F_{q^5}$ we have
\begin{align*}
P(x+a)-P(x)
&=(x+a)^{q^2+1}-x^{q^2+1}\\
&=(x^{q^2}+a^{q^2})(x+a)-x^{q^2+1}\\
&=ax^{q^2}+a^{q^2}x+a^{q^2+1}
 =\lambda_a(x)+a^{q^2+1}.
\end{align*}

It remains to show that $\lambda_a$ is bijective.
Suppose now that $x\in\F_{q^5}$ satisfies $\lambda_a(x)=0$. Since $a\ne0$, write $x=at$ with $t\in\F_{q^5}$. Then
\[
        0=\lambda_a(at)=a^{q^2+1}(t^{q^2}+t),
\]
so $t^{q^2}=-t$. Since $q$ is odd, so is $q^2$. Hence $t^{q^4}=(-t)^{q^2}=-t^{q^2}=t$. By taking the $q$-th power of both sides of $t^{q^4}=t$, we have $t^{q^5}=t^q$. Since $t\in\F_{q^5}$, we also have $t^{q^5}=t$ by Lemma~\ref{lem:finite-field-facts}\ref{part:finite-field-roots}, applied with $n=5$. Therefore $t^q=t$. By taking the $q$-th power of both sides of $t^q=t$, we have $t^{q^2}=t^q=t$. Comparing this with $t^{q^2}=-t$, we obtain $2t=0$. The field $\F_{q^5}$ has odd characteristic, so $t=0$. Thus $0$ is the only root of $\lambda_a$ in $\F_{q^5}$. By Observation~\ref{obs:q-polynomial-properties}, $\lambda_a$ is bijective. The map
\[
        x\longmapsto P(x+a)-P(x)
                =\lambda_a(x)+a^{q^2+1}
\]
is a translation of $\lambda_a$, so it is also bijective. Hence $P$ is planar.

By Lemma~\ref{lem:finite-field-facts}\ref{part:frobenius-identities}, applied with $i=2$, for $x,y\in\F_{q^5}$ we have
\begin{align*}
P(x+y)&=(x^{q^2}+y^{q^2})(x+y)=P(x)+P(y)+x^{q^2}y+xy^{q^2},\\
P(x-y)&=(x^{q^2}-y^{q^2})(x-y)=P(x)+P(y)-x^{q^2}y-xy^{q^2}.
\end{align*}
The sum of these two identities is the second assertion.
\end{proof}

We next construct a $C_4$-free bipartite graph from a planar polynomial over a finite field. Similar finite-field constructions have been used to study several problems in extremal combinatorics \cite{AllenKeevashSudakovVerstraete2014,CMT2018,LvLuFang2020,LvLuFang2022,Timmons2017,TimmonsVerstraete2015}.

\begin{lemma}\label{lem:planar-graph}
Let $F$ be a finite field and let $f\in F[X]$ be planar over $F$. Let $A$ and $B$ be disjoint copies of $F^2$. For each $(x,y)\in F^2$, denote its copies in $A$ and $B$ by $(x,y)_A$ and $(x,y)_B$, respectively. Let $G$ be the bipartite graph with vertex set $A\cup B$ and edge set
\[
        E(G)=\bigl\{\{(x,y)_A,(x+t,y+f(t))_B\}:x,y,t\in F\bigr\}.
\]
Then $G$ does not contain a $4$-cycle.
\end{lemma}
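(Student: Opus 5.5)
The plan is to argue by contradiction. Suppose $G$ contains a $4$-cycle. Such a cycle alternates between the two parts, so it has two distinct vertices $(x_1,y_1)_A,(x_2,y_2)_A$ and two distinct vertices $(u_1,v_1)_B,(u_2,v_2)_B$, with all four cross pairs adjacent. The key observation is that the edge condition is quite rigid. The vertex $(x,y)_A$ is adjacent to $(u,v)_B$ exactly when $v-y=f(u-x)$, because the parameter $t$ is forced to be $u-x$. So the neighbourhood of $(x,y)_A$ is the translate of the graph of $f$ by $(x,y)$.

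The first step is to record the four adjacencies:
\[
v_j-y_i=f(u_j-x_i),\qquad i,j\in\{1,2\}.
\]
Subtracting the equation with $i=2$ from the one with $i=1$, for each fixed $j$, gives
\[
y_2-y_1=f(u_j-x_1)-f(u_j-x_2),\qquad j=1,2.
\]

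The second step handles the case $x_1=x_2$. Then the right-hand side vanishes, so $y_1=y_2$, and the two $A$-vertices coincide. This is a contradiction.

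The third step handles the case $x_1\neq x_2$. Put $a=x_1-x_2\neq0$ and $z_j=u_j-x_1$. The displayed relation becomes
\[
f(z_j)-f(z_j+a)=y_2-y_1 \qquad\text{for } j=1,2.
\]
By planarity (Definition~\ref{def:planar}), the map $z\mapsto f(z+a)-f(z)$ is injective, so $z_1=z_2$, that is, $u_1=u_2$. Returning to the adjacency equation for $i=1$ gives $v_j=y_1+f(u_j-x_1)$, so $v_1=v_2$ as well. Hence the two $B$-vertices coincide, again a contradiction.

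I do not expect a real obstacle. The only points needing care are checking that adjacency determines $t$ uniquely and keeping the sign convention straight when applying planarity with the shift $a$.
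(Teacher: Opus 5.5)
Your proof is correct and follows essentially the same route as the paper. Both split on whether the two $A$-vertices have equal first coordinates; in the other case, injectivity of $z\mapsto f(z+a)-f(z)$ for $a\neq 0$ forces the two $B$-vertices to coincide, and the only difference is that you work with absolute coordinates where the paper uses the edge parameters.
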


\begin{proof}
Suppose to the contrary that two distinct vertices $u_1,u_2\in A$ have two distinct common neighbors $v_1,v_2\in B$. In the following coordinate calculations, we identify each vertex with its underlying element of $F^2$. For suitable $x_1,x_2,y_1,y_2\in F$,
\begin{align*}
v_1&=u_1+(x_1,f(x_1))=u_2+(y_1,f(y_1)),\\
v_2&=u_1+(x_2,f(x_2))=u_2+(y_2,f(y_2)).
\end{align*}
Thus we have
\[
 u_2-u_1
 =(x_1-y_1,f(x_1)-f(y_1))
 =(x_2-y_2,f(x_2)-f(y_2)).
\]
Comparing the first coordinates, we have
$x_1-y_1=x_2-y_2$. Let $t$ denote this common value.
Suppose that $t=0$. Then $x_1=y_1$ and $x_2=y_2$. In particular, the preceding vector equality becomes
\[
 u_2-u_1=(x_1-y_1,f(x_1)-f(y_1))=(0,0).
\]
It follows that $u_1=u_2$, contrary to the choice of two distinct vertices $u_1$ and $u_2$. Therefore, $t\ne0$.
Comparing the second coordinates, we obtain
\[
f(y_1+t)-f(y_1)=f(y_2+t)-f(y_2).
\]
By Definition~\ref{def:planar}, the map
$x\mapsto f(x+t)-f(x)$ is injective. Hence $y_1=y_2$. Since $x_1-y_1=x_2-y_2$, we also have $x_1=x_2$. It follows that $v_1=v_2$, a contradiction. Having two vertices in one part with two common neighbors is equivalent to having a $4$-cycle, so $G$ is $C_4$-free.
\end{proof}

\subsection{An auxiliary \texorpdfstring{$q$}{q}-polynomial}

Before constructing the auxiliary $q$-polynomial used in the proof of Theorem~\ref{thm:main}, we recall a five-periodic recurrence due to Lyness~\cite{Lyness1942}.

\begin{lemma}\label{lem:lyness}
Let $F$ be a field and let $(y_i)_{i\ge0}$ be a sequence of nonzero elements of $F$ satisfying $y_i y_{i+2}=1+y_{i+1}$ for every $i\ge0$. Then $y_{i+5}=y_i$ for every $i\ge0$.
\end{lemma}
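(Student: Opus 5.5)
The plan is to compute the terms explicitly from $y_0$ and $y_1$ and check that the sequence closes up after five steps. Since every $y_i$ is nonzero, the recurrence can be solved forward as $y_{i+2}=(1+y_{i+1})/y_i$. By induction on $i$, each term is therefore determined uniquely by the two preceding terms.

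Set $a=y_0$ and $b=y_1$, both nonzero. The recurrence then gives, in order,
\[
y_2=\frac{1+b}{a},\qquad
y_3=\frac{1+y_2}{y_1}=\frac{a+1+b}{ab},\qquad
y_4=\frac{1+y_3}{y_2}=\frac{(a+1)(b+1)}{ab}\cdot\frac{a}{1+b}=\frac{1+a}{b}.
\]
Each division is by a nonzero element:
\begin{itemize}
\item $y_2\ne0$ forces $1+b\ne0$, which is needed to compute $y_4$;
\item $y_3\ne0$ forces $a+b+1\ne0$;
\item $y_4\ne0$ forces $1+a\ne0$.
\end{itemize}
The remaining two terms are
\[
y_5=\frac{1+y_4}{y_3}=\frac{b+1+a}{b}\cdot\frac{ab}{a+b+1}=a,
\qquad
y_6=\frac{1+y_5}{y_4}=\frac{1+a}{(1+a)/b}=b.
\]

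This gives $(y_5,y_6)=(y_0,y_1)$. The shifted sequence $(y_{i+5})_{i\ge0}$ is again a sequence of nonzero elements satisfying the same recurrence. It has the same first two terms as $(y_i)$, so the uniqueness noted above gives $y_{i+5}=y_i$ for all $i$. Equivalently, one can argue by strong induction: if $y_{i+5}=y_i$ and $y_{i+6}=y_{i+1}$, then $y_{i+7}=(1+y_{i+6})/y_{i+5}=(1+y_{i+1})/y_i=y_{i+2}$.

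The main point needing care is justifying each division. Every denominator that appears is either some $y_j$, which is nonzero by hypothesis, or one of the factors $1+b$, $a+b+1$, $1+a$. Each of these factors is nonzero because it appears in the numerator of a term assumed to be nonzero. Apart from this check, the argument is routine algebra that holds in any field, including characteristic $2$, since no division by $2$ occurs.
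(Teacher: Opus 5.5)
Your proof is correct and takes essentially the same approach as the paper. You write the terms in terms of $a$ and $b$, check that $(y_5,y_6)=(a,b)$ using the nonvanishing of $1+b$, $a+b+1$ and $1+a$ that follows from the hypothesis, and finish by noting that two consecutive terms determine the rest. The only cosmetic difference is that the paper sets $a=y_i$, $b=y_{i+1}$ for arbitrary $i$, whereas you start at $i=0$ and propagate by induction.
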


\begin{proof}
Fix $i\ge0$, and let $a=y_i$ and $b=y_{i+1}$. By successive substitution in the recurrence, we have
\[
 y_{i+2}=\frac{b+1}{a},\qquad
 y_{i+3}=\frac{a+b+1}{ab},\qquad
 y_{i+4}=\frac{a+1}{b},\qquad
 y_{i+5}=a=y_i,\qquad
 y_{i+6}=b=y_{i+1}.
\]
All denominators and cancelled factors are nonzero because every term of the sequence is nonzero. Thus
$(y_{i+5},y_{i+6})=(y_i,y_{i+1})$. Since two consecutive terms determine every subsequent term through $y_{j+2}=(1+y_{j+1})/y_j$, the sequence repeats after five terms. In particular, $y_{i+5}=y_i$ for every $i\ge0$.
\end{proof}

We now use this five-periodicity to construct the auxiliary $q$-polynomial.

\begin{lemma}\label{lem:algebraic}
Let $H\in\F_q[X]$ be the polynomial $H(X)=X^{q^2+1}-X^q-1$.
Then $H$ has exactly $q^2+1$ distinct roots in $\F_{q^5}$. If $\rho$ is any root of $H$ and
\[
        L_\rho(X)=X^q-\rho X^{q^2}-\rho^{q^2}X,
\]
then $L_\rho$ is a $q$-polynomial over $\F_{q^5}$, and there are exactly $q^2+1$ elements $x\in\F_{q^5}$ satisfying $L_\rho(x)=x^{q^2+1}$.
\end{lemma}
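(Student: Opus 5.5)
The plan has two parts: first show that every root of $H$ in an algebraic closure lies in $\F_{q^5}$ and that the roots are simple, then reduce the equation $L_\rho(x)=x^{q^2+1}$ to $H(x+\rho)=0$ by a translation.

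\emph{Roots of $H$.} By Lemma~\ref{lem:polynomial-facts}\ref{part:polynomial-factorization}, $H$ splits as $\prod_{j=1}^{q^2+1}(X-\alpha_j)$ over some extension field $F_1$ of $\F_q$, since $H$ is monic of degree $q^2+1$.

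To see that the $\alpha_j$ are distinct, compute the formal derivative. In characteristic $p$ we have $q\equiv0$ and $q^2+1\equiv1$, so
\[
H'(X)=(q^2+1)X^{q^2}-qX^{q-1}=X^{q^2}.
\]
Its only root is $0$, and $H(0)=-1\neq0$. So $H$ and $H'$ have no common root, and by Lemma~\ref{lem:polynomial-facts}\ref{part:repeated-root} all roots of $H$ are simple. Hence $H$ has $q^2+1$ distinct roots in $F_1$.

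To place them in $\F_{q^5}$, fix a root $\alpha\in F_1$ and set $y_i=\alpha^{q^i}$ for $i\ge0$. Each $y_i$ is nonzero because $\alpha\neq0$. Since $H$ has coefficients in $\F_q$, Lemma~\ref{lem:finite-field-facts}\ref{part:frobenius-identities} lets us raise $\alpha^{q^2+1}=\alpha^q+1$ to the power $q^i$. This gives
\[
\alpha^{q^{i+2}}\alpha^{q^i}=\alpha^{q^{i+1}}+1,
\qquad\text{that is,}\qquad
y_iy_{i+2}=1+y_{i+1}.
\]
Lemma~\ref{lem:lyness} then yields $y_5=y_0$, so $\alpha^{q^5}=\alpha$. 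By Lemma~\ref{lem:finite-field-facts}\ref{part:finite-field-roots}, $\alpha\in\F_{q^5}$. Therefore $H$ has exactly $q^2+1$ distinct roots in $\F_{q^5}$.

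\emph{The equation $L_\rho(x)=x^{q^2+1}$.} Since $\rho\in\F_{q^5}$, the coefficients $1,-\rho,-\rho^{q^2}$ lie in $\F_{q^5}$, so $L_\rho$ is a $q$-polynomial over $\F_{q^5}$.

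The key step is to complete the "square" by translating by $\rho$. Lemma~\ref{lem:finite-field-facts}\ref{part:frobenius-identities} gives
\[
(x+\rho)^{q^2+1}=x^{q^2+1}+\rho x^{q^2}+\rho^{q^2}x+\rho^{q^2+1}.
\]
Hence $L_\rho(x)=x^{q^2+1}$ is equivalent to
\[
(x+\rho)^{q^2+1}=x^q+\rho^{q^2+1}.
\]
Because $H(\rho)=0$, we have $\rho^{q^2+1}=\rho^q+1$. The right-hand side is therefore $x^q+\rho^q+1=(x+\rho)^q+1$. So the equation is equivalent to $H(x+\rho)=0$.

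The translation $x\mapsto x+\rho$ is a bijection of $\F_{q^5}$. By the first part, the solutions $x\in\F_{q^5}$ correspond exactly to the $q^2+1$ roots of $H$ in $\F_{q^5}$.

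The main obstacle is finding this translation identity; once it is in hand, everything reduces to the root count for $H$. That count in turn rests on noticing that the Frobenius orbit of a root satisfies Lyness's recurrence.
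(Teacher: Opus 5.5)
Your proposal is correct and follows the paper's proof essentially step for step: the Frobenius orbit of a root satisfies Lyness's recurrence, $H'=X^{q^2}$ gives simplicity of the roots, and translating by $\rho$ gives $H(x+\rho)=x^{q^2+1}-L_\rho(x)$. One small technical point is that Lemma~\ref{lem:finite-field-facts}\ref{part:finite-field-roots} requires an extension of $\F_{q^5}$, so you should take $F_1$ to be one by applying Lemma~\ref{lem:polynomial-facts}\ref{part:polynomial-factorization} to $H$ viewed in $\F_{q^5}[X]$, as the paper does; otherwise the conclusion $\alpha\in\F_{q^5}$ is not directly meaningful.
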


\begin{proof}
By Lemma~\ref{lem:polynomial-facts}\ref{part:polynomial-factorization}, applied to $H$ as a polynomial over $\F_{q^5}$, there is an extension field $F$ of $\F_{q^5}$ containing elements $\alpha_1,\ldots,\alpha_{q^2+1}$ such that
\[
        H(X)=\prod_{j=1}^{q^2+1}(X-\alpha_j).
\]
Let $\alpha$ be any root of $H$ in $F$. Since $H(0)\ne0$, we have $\alpha\ne0$. As $H(\alpha)=0$, we have $\alpha^{q^2+1}=\alpha^q+1$.
For every $i\ge0$, taking the $q^i$-th power of both sides and applying Lemma~\ref{lem:finite-field-facts}\ref{part:frobenius-identities}, we obtain $\alpha^{q^i}\alpha^{q^{i+2}}=1+\alpha^{q^{i+1}}$. Therefore the sequence $y_i=\alpha^{q^i}$ consists of nonzero elements and satisfies the hypothesis of Lemma~\ref{lem:lyness}. Hence $y_5=y_0$, or equivalently $\alpha^{q^5}=\alpha$. Since $F$ is an extension field of $\F_{q^5}$, Lemma~\ref{lem:finite-field-facts}\ref{part:finite-field-roots}, applied with $n=5$, implies $\alpha\in\F_{q^5}$. The root $\alpha$ was arbitrary, so $\alpha_1,\ldots,\alpha_{q^2+1}$ all belong to $\F_{q^5}$.

Since the characteristic of $\F_{q^5}$ divides $q$,
\[
        H'(X)=(q^2+1)X^{q^2}-qX^{q-1}=X^{q^2}.
\]
The only root of $H'$ in $\F_{q^5}$ is $0$, whereas $H(0)=-1$. Thus $H$ and $H'$ have no common root in $\F_{q^5}$. Since all roots of $H$ lie in $\F_{q^5}$, Lemma~\ref{lem:polynomial-facts}\ref{part:repeated-root} implies that none of them is repeated. Hence $\alpha_1,\ldots,\alpha_{q^2+1}$ are distinct, and $H$ has exactly $q^2+1$ distinct roots in $\F_{q^5}$.

Let $\rho$ be any one of these roots. By definition, $L_\rho$ is a $q$-polynomial over $\F_{q^5}$. By Lemma~\ref{lem:finite-field-facts}\ref{part:frobenius-identities} and the equality $H(\rho)=0$, for every $x\in\F_{q^5}$ we have
\begin{align*}
 H(x+\rho)
 &=(x+\rho)^{q^2+1}-(x+\rho)^q-1\\
 &=x^{q^2+1}-x^q+\rho x^{q^2}+\rho^{q^2}x
   +\rho^{q^2+1}-\rho^q-1\\
 &=x^{q^2+1}-L_\rho(x).
\end{align*}
Translation by $\rho$ is a bijection on $\F_{q^5}$. Consequently, there are exactly $q^2+1$ elements $x\in\F_{q^5}$ satisfying $L_\rho(x)=x^{q^2+1}$.
\end{proof}

\section{Proof of Theorem~\ref{thm:main}}\label{sec:construction}

In this section, we prove Theorem~\ref{thm:main}. We construct a $3$-partite graph with $q^{10}$ vertices in each part, no $4$-cycle between any two parts, and $q^{17}+q^{15}$ triangles.

By Lemma~\ref{lem:algebraic}, there exists $\rho\in\F_{q^5}$ satisfying $\rho^{q^2+1}-\rho^q-1=0$. For this $\rho$, define
\[
        P(X)=X^{q^2+1},
        \qquad
        L_\rho(X)=X^q-\rho X^{q^2}-\rho^{q^2}X.
\]
By the same lemma, $L_\rho$ is a $q$-polynomial over $\F_{q^5}$. There are exactly $q^2+1$ elements $w\in\F_{q^5}$ satisfying $P(w)=L_\rho(w)$. With $P$ and $L_\rho$ fixed, let
\[
 f(X)=2P(X)-L_\rho(X),\qquad
 g(X)=2P(X)+L_\rho(X),\qquad
 h(X)=-P(X).
\]
\Needspace{9\baselineskip}
Let $A,B,C$ be disjoint copies of $\F_{q^5}^2$. We denote their vertices by $(x,y)_A$, $(x,y)_B$ and $(x,y)_C$, respectively. Let $G_q(\rho)$ be the graph on $A\cup B\cup C$, where for all $x,y,z\in\F_{q^5}$,
\begin{itemize}
\item $(x,y)_A$ is adjacent to $(x+z,y+f(z))_B$.
\item $(x,y)_B$ is adjacent to $(x+z,y+g(z))_C$.
\item $(x,y)_C$ is adjacent to $(x+z,y+h(z))_A$.
\end{itemize}

The graph $G_q(\rho)$ is $3$-partite with $q^{10}$ vertices in each part. Since $A$, $B$ and $C$ are disjoint, every listed edge has distinct endpoints. In each edge rule, the first coordinates of the endpoints determine $z$. Hence every edge has a unique representation. Thus $G_q(\rho)$ is simple.

Since $\F_{q^5}$ has odd characteristic, the scalars $2$ and $-1$ are nonzero. By Lemmas~\ref{lem:planar} and~\ref{lem:q-polynomial-perturbation}, applied with the $q$-polynomials $-L_\rho,L_\rho,0$, respectively, the polynomials $f$, $g$ and $h$ are planar.

By Lemma~\ref{lem:planar-graph}, applied to $f$, $g$ and $h$, the bipartite graph induced by each pair of parts is $C_4$-free.

It remains to count the triangles. Fix a vertex $(x,y)_A$. Let $r,s\in\F_{q^5}$ be the field elements used in the edge rules from $A$ to $B$ and from $B$ to $C$, respectively. These two edges lead first to $(x+r,y+f(r))_B$ and then to $(x+r+s,y+f(r)+g(s))_C$. If $t\in\F_{q^5}$ is the field element used in the edge rule from $C$ to $A$, the third edge ends at $(x+r+s+t,y+f(r)+g(s)+h(t))_A$. This endpoint equals $(x,y)_A$ if and only if $t=-r-s$ and $f(r)+g(s)+h(t)=0$. Thus the three edges form a triangle if and only if $f(r)+g(s)+h(-r-s)=0$.

Since $q^2+1$ is even, $P(-r-s)=P(r+s)$. By Observation~\ref{obs:q-polynomial-properties}, $L_\rho(r-s)=L_\rho(r)-L_\rho(s)$. By Lemma~\ref{lem:planar} and the definitions of $f$, $g$ and $h$, we obtain
\[
\begin{aligned}
f(r)+g(s)+h(-r-s)
&=2P(r)+2P(s)-P(r+s)-L_\rho(r)+L_\rho(s)\\
&=P(r-s)-L_\rho(r)+L_\rho(s)\\
&=P(r-s)-L_\rho(r-s).
\end{aligned}
\]
Thus the three edges form a triangle if and only if $P(r-s)=L_\rho(r-s)$.
For a fixed vertex $(x,y)_A$, the first-coordinate differences along the $A$--$B$ and $B$--$C$ edges uniquely determine $r$ and $s$. Hence distinct pairs $(r,s)$ satisfying this condition determine distinct triangles containing $(x,y)_A$. Every such triangle arises from one of these pairs.

Let $d=r-s$. By Lemma~\ref{lem:algebraic}, there are $q^2+1$ possible values of $d$. For each such value, $s$ is arbitrary and $r=d+s$ is uniquely determined, so there are $q^5(q^2+1)$ corresponding pairs $(r,s)$. Thus every vertex in $A$ lies in $q^5(q^2+1)$ triangles. Since every triangle contains exactly one vertex of $A$ and $|A|=q^{10}$, the total number of triangles is $q^{10}\cdot q^5(q^2+1)=q^{17}+q^{15}$. Therefore, $\triangle(q^{10})\ge q^{17}+q^{15}$, as required.

\paragraph{Declaration of generative AI use.} We used ChatGPT to assist in finding the auxiliary $q$-polynomial $L_\rho$, choosing the planar polynomials $f,g,h$ for the construction and proofreading the manuscript. All AI-assisted suggestions were independently reviewed and revised as necessary by the authors. The authors take full responsibility for the content of the paper.

\end{document}